\documentclass{amsart}
\usepackage{setspace}
\usepackage{a4}
\usepackage{amsthm}
\usepackage{latexsym}
\usepackage{amsfonts}
\usepackage{graphicx}
\usepackage{textcomp}
\usepackage{cite}
\usepackage{enumerate}
\usepackage{amssymb}
\usepackage{hyperref}
\usepackage{amsmath}
\usepackage{tikz}
\usepackage[mathscr]{euscript}
\usepackage{mathtools}
\newtheorem{theorem}{Theorem}[section]

\newtheorem{corollary}[theorem] {Corollary}

\newtheorem{problem}[theorem]{Problem}

\title{This is the title}
\usepackage{fancyhdr}

\begin{document}
	\vspace{0.9cm}	
\hrule\hrule\hrule\hrule\hrule
\vspace{0.3cm}	
\begin{center}
{\bf{Quaternionic Kittaneh Numerical Radius Inequalities}}\\
\vspace{0.3cm}
\hrule\hrule\hrule
\vspace{0.3cm}
\textbf{K. Mahesh Krishna}\\
School of Mathematics and Natural Sciences\\
Chanakya University Global Campus\\
NH-648, Haraluru Village\\
Devanahalli Taluk, 	Bengaluru  North District\\
Karnataka State 562 110 India \\
Email: kmaheshak@gmail.com\\

Date: \today
\hrule\hrule
\end{center}

\vspace{0.5cm}
\textbf{Abstract}: We show that for certain classes of matrices over quaternions, a more restrictive version of breakthrough numerical radius inequalities obtained by Kittaneh [\textit{{Stud. Math.}, 2005}] hold.

\textbf{Keywords}: Numerical radius, Quaternions.

\textbf{Mathematics Subject Classification (2020)}: 47A12, 11R52.\\

\hrule

\hrule
\section{Introduction}
Let $\mathbb{C}^n$ be the standard Hilbert space (with the inner product linear in first variable). Recall that 
the \textbf{numerical range} or the \textbf{field of values}  \cite{GUSTAFSONRAO, HALMOS, HORNJOHNSON, GOLDBERGTADMOR, GAUWUBOOK, TOEPLITZ, HAUSDORFF, ISTRATESCU, HALMOS2} of a matrix $T \in M_n(\mathbb{C})$ is defined as 
\begin{align*}
	W_\mathbb{C}(T)\coloneqq \{\langle Th, h\rangle: h \in \mathbb{C}^n, \|h\|=1\} \subseteq \mathbb{C}
\end{align*}
and the \textbf{numerical radius} of  $T$ is defined as 
\begin{align*}
	w_\mathbb{C}(T)\coloneqq \sup_{\lambda \in W_\mathbb{C}(T)}|\lambda|=\sup_{h \in \mathbb{C}^n, \|h\|=1} |\langle Th, h \rangle |.
\end{align*}
The celebrated Toeplitz-Hausdorff theorem says that the numerical range is always a convex subset of  $\mathbb{C}$ \cite{HAUSDORFF, TOEPLITZ, HALMOS, HORNJOHNSON, GUSTAFSONRAO, GAUWUBOOK, LI}.  Elementary calculations using the Cauchy-Schwarz inequality and the generalized polarization identity  \cite{GUSTAFSONRAO} show that
\begin{align}\label{FI}
	\frac{\|T\|}{2}\leq w_\mathbb{C}(T)\leq \|T\|.
\end{align}
 Inequalities (\ref{FI}) say that the map $w_\mathbb{C}(\cdot):M_n(\mathbb{C})\to \mathbb{R}$ is a norm which is equivalent to the operator norm.  
In 2005, Kittaneh made a far reaching generalization of Inequalities (\ref{FI}) \cite{KITTANEH}.
\begin{theorem} \cite{KITTANEH} (\textbf{Kittaneh Numerical Radius Inequalities}) \label{KT}
	Let $T \in M_n(\mathbb{C})$. Then 
\begin{align}\label{KI}
	\frac{1}{2}\sqrt{\|T^*T+TT^*\|}\leq w_\mathbb{C}(T)\leq 	\frac{1}{\sqrt{2}}\sqrt{\|T^*T+TT^*\|}.	
\end{align}
\end{theorem}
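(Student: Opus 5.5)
The plan is to use the Cartesian decomposition $T = A + iB$, where $A = \frac{T+T^*}{2}$ and $B = \frac{T-T^*}{2i}$ are Hermitian. A direct computation gives
\begin{align*}
T^*T + TT^* = 2(A^2 + B^2).
\end{align*}
So both inequalities in (\ref{KI}) reduce to comparing $w_\mathbb{C}(T)^2$ with $\|A^2+B^2\|$. Precisely, we must show
\begin{align*}
\frac{1}{2}\|A^2+B^2\| \leq w_\mathbb{C}(T)^2 \leq \|A^2+B^2\|.
\end{align*}

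\textbf{Upper bound.} Fix a unit vector $h \in \mathbb{C}^n$. Since $A$ and $B$ are Hermitian, $\langle Ah,h\rangle$ and $\langle Bh,h\rangle$ are real. Hence
\begin{align*}
|\langle Th,h\rangle|^2 = \langle Ah,h\rangle^2 + \langle Bh,h\rangle^2 .
\end{align*}
By Cauchy--Schwarz, $\langle Ah,h\rangle^2 \leq \|Ah\|^2 = \langle A^2h,h\rangle$, and similarly for $B$. Therefore
\begin{align*}
|\langle Th,h\rangle|^2 \leq \langle (A^2+B^2)h,h\rangle \leq \|A^2+B^2\|.
\end{align*}
Taking the supremum over $h$ gives $w_\mathbb{C}(T)^2 \leq \|A^2+B^2\| = \frac{1}{2}\|T^*T+TT^*\|$, which is the right-hand inequality.

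\textbf{Lower bound.} For every real $\theta$, the matrix $\mathrm{Re}(e^{i\theta}T) = \frac{e^{i\theta}T + e^{-i\theta}T^*}{2}$ is Hermitian. Its norm is therefore its numerical radius, so
\begin{align*}
\|\mathrm{Re}(e^{i\theta}T)\| \leq w_\mathbb{C}(e^{i\theta}T) = w_\mathbb{C}(T).
\end{align*}
Take $\theta = 0$ and $\theta = -\pi/2$; these give $\mathrm{Re}(T) = A$ and $\mathrm{Re}(e^{-i\pi/2}T) = B$. So $\|A\| \leq w_\mathbb{C}(T)$ and $\|B\| \leq w_\mathbb{C}(T)$, and hence
\begin{align*}
\|A^2+B^2\| \leq \|A\|^2 + \|B\|^2 \leq 2\, w_\mathbb{C}(T)^2 .
\end{align*}
Multiplying by $2$ yields $\|T^*T+TT^*\| \leq 4 w_\mathbb{C}(T)^2$, which is the left-hand inequality.

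\textbf{Main obstacle.} The only delicate step is $\|\mathrm{Re}(e^{i\theta}T)\| \leq w_\mathbb{C}(T)$. It uses two facts. First, for a Hermitian operator $H$ one has $\|H\| = \sup_{\|h\|=1}|\langle Hh,h\rangle|$; this follows from the spectral theorem, or from the polarization argument behind (\ref{FI}). Second, $|\langle \mathrm{Re}(e^{i\theta}T)h,h\rangle| = |\mathrm{Re}\, e^{i\theta}\langle Th,h\rangle| \leq |\langle Th,h\rangle|$. Everything else is algebra with the Cartesian decomposition.
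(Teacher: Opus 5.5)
Your proof is correct, and your lower bound takes a different route from the paper's. The paper does not prove this theorem itself; it is quoted from Kittaneh. However, its proof of Theorem~\ref{QKT} is Kittaneh's argument with four components instead of two, so that is the natural comparison.

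Your upper bound is exactly that argument.

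For the lower bound, Kittaneh's argument (as reproduced in the paper) proceeds as follows:
\begin{enumerate}[\upshape(1)]
\item It uses $\langle Ah,h\rangle^2+\langle Bh,h\rangle^2\geq\frac{1}{2}\bigl(|\langle Ah,h\rangle|+|\langle Bh,h\rangle|\bigr)^2\geq\frac{1}{2}|\langle (A\pm B)h,h\rangle|^2$ to get $w_{\mathbb{C}}(T)^2\geq\frac{1}{2}\|A\pm B\|^2$.
\item It then averages over the two signs, using $(A+B)^2+(A-B)^2=2(A^2+B^2)$ together with the triangle inequality.
\end{enumerate}
You instead bound $\|A\|$ and $\|B\|$ separately by $w_{\mathbb{C}}(T)$ and use $\|A^2+B^2\|\leq\|A\|^2+\|B\|^2$. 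Both routes give exactly $\frac{1}{2}\|A^2+B^2\|\leq w_{\mathbb{C}}(T)^2$.

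\textbf{What your route buys.}
\begin{enumerate}[\upshape(1)]
\item It is shorter and needs no sign bookkeeping. Applied to the paper's four-term identity for $|\langle h,Th\rangle|^2$, it gives the same constant $\frac{1}{2\sqrt{2}}$ without the four sign patterns.
\item If you keep the maximum rather than passing to the sum, the identity $\max(\|A\|^2,\|B\|^2)=\frac{1}{2}(\|A\|^2+\|B\|^2)+\frac{1}{2}\bigl|\|A\|^2-\|B\|^2\bigr|$ yields the sharper bound $w_{\mathbb{C}}(T)^2\geq\frac{1}{4}\|T^*T+TT^*\|+\frac{1}{2}\bigl|\|A\|^2-\|B\|^2\bigr|$.
\end{enumerate}

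\textbf{What Kittaneh's route buys.} It passes through the intermediate estimate $w_{\mathbb{C}}(T)\geq\frac{1}{\sqrt{2}}\|A\pm B\|$. This is incomparable with your intermediate estimate $w_{\mathbb{C}}(T)\geq\max(\|A\|,\|B\|)$. It is attained when $A=B$. Yours is attained when $T=A_1\oplus iB_1$ with $A_1,B_1$ Hermitian.

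\textbf{A small simplification.} The rotation by $e^{i\theta}$ is unnecessary. Your own identity $|\langle Th,h\rangle|^2=\langle Ah,h\rangle^2+\langle Bh,h\rangle^2$ already gives $|\langle Ah,h\rangle|,|\langle Bh,h\rangle|\leq|\langle Th,h\rangle|$.
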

Since 
\begin{align*}
			\frac{\|T\|}{2}\leq \frac{1}{2}\sqrt{\|T^*T+TT^*\|}\leq	\frac{1}{\sqrt{2}}\sqrt{\|T^*T+TT^*\|}\leq \|T\|, \quad \forall T \in M_n(\mathbb{C}),
\end{align*}
Inequalities (\ref{KI}) improve Inequalities (\ref{FI}).
Even though quaternionic numerical ranges are well-studied in the literature  \cite{YEUNG, ZHANG, SOTHOMPSONZHANG, SO, KIPPENHAHN, SOTHOMSON, THOMPSON, CARVALHODIOGOMENDES, CARVALHODIOGOMENDES2, SO2, CARVALHODIOGOMENDES3, YEUNGSIU}, to the author's knowledge, quaternionic version of Inequalities (\ref{KI}) are not known. 

In this  note, we show that for certain classes of quaternionic matrices, a more restrictive version of Inequalities  (\ref{KI}) hold. We recall the set up. 
Let  
\begin{align*}
	\mathbb{H}\coloneqq \{a+ib+jc+kd: a, b, c, d \in \mathbb{R}, i^2=j^2=k^2=ijk=-1\}
\end{align*}
be the skew-field of quaternion numbers with the conjugation
\begin{align*}
	\overline{a+ib+jc+kd}\coloneqq a-ib-jc-kd, \quad \forall a+ib+jc+kd \in 	\mathbb{H}
\end{align*}
and the modulus 
\begin{align*}
	|a+ib+jc+kd|\coloneqq \sqrt{a^2+b^2+c^2+d^2}, \quad \forall a+ib+jc+kd \in 	\mathbb{H}.	
\end{align*}
Let $\mathbb{H}^n$ be the standard right quaternionic Hilbert space equipped with the inner product 
\begin{align*}
	\langle (z_r)_{r=1}^n, (w_r)_{r=1}^n\rangle \coloneqq \sum_{r=1}^{n}\overline{z_r}w_r, \quad \forall (z_r)_{r=1}^n, (w_r)_{r=1}^n\in \mathbb{H}^n
\end{align*}
and the norm 
\begin{align*}
	\|(z_r)_{r=1}^n\|\coloneqq \left(\sum_{r=1}^{n}|z_r|^2\right)^\frac{1}{2}, \quad \forall (z_r)_{r=1}^n\in \mathbb{H}^n.
\end{align*}
Given a matrix $T \in M_n(\mathbb{H})$, its operator norm is defined as 
\begin{align*}
	\|T\|\coloneqq \sup_{h \in \mathbb{H}^n, \|h\|=1}\|Th\|.
\end{align*}
We derive quaternionic version of Theorem \ref{KT} in Theorem \ref{QKT}.

\section{Quaternionic Kittaneh Numerical Radius Inequalities}

Similar to the Hilbert space case, the \textbf{quaternionic numerical range} \cite{YEUNG, ZHANG, SOTHOMPSONZHANG, SO, KIPPENHAHN, SOTHOMSON, THOMPSON, CARVALHODIOGOMENDES, CARVALHODIOGOMENDES2, SO2, CARVALHODIOGOMENDES3, YEUNGSIU}
of a quaternionic matrix $T \in M_n(\mathbb{H})$ is defined as 
\begin{align*}
	W_\mathbb{H}(T)\coloneqq \{\langle h,Th\rangle: h \in \mathbb{H}^n, \|h\|=1\}\subseteq \mathbb{H}
\end{align*}
and the \textbf{quaternionic numerical radius} of $T$ is defined as 
\begin{align*}
	w_\mathbb{H}(T)\coloneqq \sup_{\lambda \in W_\mathbb{H}(T)}|\lambda|=\sup_{h \in \mathbb{H}^n, \|h\|=1}|\langle h, Th\rangle|.
\end{align*}
Unlike in the complex case, the quaternionic numerical range need not be convex (meaning the Toeplitz-Hausdorff theorem fails) \cite{HORNJOHNSON}. 
Let $T \in M_n(\mathbb{H})$ be self-adjoint.  We note that
\begin{align}\label{ST}
	\|T\|=\sup_{h \in \mathbb{H}^n, \|h\|=1}|\langle h, Th\rangle|.
\end{align}
In fact, define $M\coloneqq \sup_{h \in \mathbb{H}^n, \|h\|=1}|\langle h, Th\rangle|$. The we have
\begin{align*}
	M= \sup_{h \in \mathbb{H}^n, \|h\|=1}|\langle h, Th\rangle|\leq \sup_{h \in \mathbb{H}^n, \|h\|=1}\|h\| \|Th\|=\sup_{h \in \mathbb{H}^n, \|h\|=1}\|Th\|=\|T\|.
\end{align*}
Now let $x, y \in \mathbb{H}^n$. Then 
\begin{align*}
	\langle x+y, T(x+y)\rangle -\langle x-y, T(x-y)\rangle =2(\langle x, Ty \rangle +\langle y, Tx \rangle).
\end{align*}
Therefore
\begin{align*}
	|(\langle x, Ty \rangle +\langle y, Tx \rangle|&\leq \frac{1}{2}(|\langle x+y, T(x+y)\rangle|+|\langle x-y, T(x-y)\rangle|)\\
	&\leq  \frac{1}{2}(M\|x+y\|^2+M\|x-y\|^2)\\
	&=M(\|x\|^2+\|y\|^2).
\end{align*}
Since, $T$ is self-adjoint, 
\begin{align}\label{AE}
		|(\langle x, Ty \rangle +\langle Ty, x \rangle|\leq M(\|x\|^2+\|y\|^2), \quad \forall x, y \in \mathbb{H}^n.
\end{align}
For a given $h \in \mathbb{H}^n$ with $ \|h\|=1$ and $\|Th\|\neq 0$,  we take 
\begin{align*}
	x=\frac{Th}{\|Th\|}, \quad y = h
\end{align*}
in Inequality (\ref{AE}). Then we have 
\begin{align*}
	2\|Th\|\leq 2M.
\end{align*}
By varying $h$ we get $\|T\|\leq M$. Therefore $\|T\|=M$. \\
We now derive our main theorem using Equation (\ref{ST}).
 In the paper, the anti-commutator of $T, S \in M_n(\mathbb{H})$ is defined as $\{T, S\}\coloneqq TS+ST$.
\begin{theorem}\label{QKT}
(\textbf{Quaternionic Kittaneh Numerical Radius Inequalities})
Let $T \in M_n(\mathbb{H})$. Assume that 
\begin{align*}
	T=A+iB+jC+kD, \text{ for some } A, B, C ,D \in M_n(\mathbb{H})
\end{align*}
satisfying the following conditions. 
	\begin{enumerate}[\upshape(i)]
		\item $A^*=A$, $B^*=B$, $C^*=C$, $D^*=D$.
	\item $iB=Bi$, $jC=Cj$, $Dk=kD$.
	\item $\{iB, jC\}+\{iB, kD\}+\{jC, kD\}=0$.
		\end{enumerate}
Then 
\begin{align}\label{MI}
	\frac{1}{2\sqrt{2}}\sqrt{\|T^*T+TT^*\|}\leq w_\mathbb{H}(T)\leq\frac{1}{\sqrt{2}}\sqrt{\|T^*T+TT^*\|}.	
\end{align}	
\end{theorem}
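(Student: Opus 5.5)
The plan is to reduce everything to one algebraic identity for $T^*T+TT^*$ and then compare both sides with the real and imaginary parts of $\langle h,Th\rangle$.

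\emph{Step 1 (algebra).} Put $X\coloneqq iB+jC+kD$, so $T=A+X$. By (i), $(iB)^*=B^*\,\overline{i}=-Bi$, and by (ii) this equals $-iB$; similarly for $jC$ and $kD$. Hence $X^*=-X$ and $T^*=A-X$. Then
\begin{align*}
T^*T+TT^*=(A-X)(A+X)+(A+X)(A-X)=2A^2-2X^2 .
\end{align*}
Expanding $X^2=(iB)^2+(jC)^2+(kD)^2+\{iB,jC\}+\{iB,kD\}+\{jC,kD\}$, condition (iii) kills the anticommutators. Condition (ii) gives $(iB)^2=i^2B^2=-B^2$, and likewise $(jC)^2=-C^2$, $(kD)^2=-D^2$. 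Therefore
\begin{align*}
T^*T+TT^*=2(A^2+B^2+C^2+D^2),\qquad X^*X=-X^2=B^2+C^2+D^2 .
\end{align*}

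\emph{Step 2 (upper bound).} For a unit vector $h$, $\langle h,Ah\rangle$ is real because $A$ is self-adjoint. Also $\langle h,Xh\rangle$ is purely imaginary, since $\overline{\langle h,Xh\rangle}=\langle Xh,h\rangle=\langle h,X^*h\rangle=-\langle h,Xh\rangle$. So these are exactly the real and imaginary parts of $\langle h,Th\rangle$. By Cauchy--Schwarz,
\begin{align*}
|\langle h,Th\rangle|^2=\langle h,Ah\rangle^2+|\langle h,Xh\rangle|^2\leq \|Ah\|^2+\|Xh\|^2=\langle h,(A^2+X^*X)h\rangle\leq \tfrac12\|T^*T+TT^*\|.
\end{align*}
Taking the supremum over unit $h$ gives the right inequality of (\ref{MI}).

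\emph{Step 3 (lower bound).} First, $\|A\|=\sup_{\|h\|=1}|\langle h,Ah\rangle|$ by Equation (\ref{ST}). Each $|\langle h,Ah\rangle|=|\mathrm{Re}\langle h,Th\rangle|\leq w_\mathbb{H}(T)$, so $\|A\|\leq w_\mathbb{H}(T)$.

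The main obstacle is the analogous estimate for the skew-adjoint $X$, since (\ref{ST}) is only available for self-adjoint matrices. I would redo its polarization argument. With $M_X\coloneqq\sup_{\|h\|=1}|\langle h,Xh\rangle|$, skew-adjointness gives
\begin{align*}
\langle x+y,X(x+y)\rangle-\langle x-y,X(x-y)\rangle=2(\langle x,Xy\rangle-\overline{\langle x,Xy\rangle}),
\end{align*}
which is $4$ times the imaginary part of $\langle x,Xy\rangle$. Hence $|\mathrm{Im}\langle x,Xy\rangle|\leq \tfrac12 M_X(\|x\|^2+\|y\|^2)$.

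Now take a unit $h$ with $Xh\neq0$, set $x=Xh/\|Xh\|$, and set $y=hu$ for a unit purely imaginary quaternion $u$. Because $X$ is right-linear, $\langle x,Xy\rangle=\langle x,Xh\rangle u=\|Xh\|u$, which is purely imaginary with modulus $\|Xh\|$. Substituting gives $\|Xh\|\leq M_X$, so $\|X\|\leq M_X\leq w_\mathbb{H}(T)$, the last step because $M_X$ is a supremum of $|\mathrm{Im}\langle h,Th\rangle|$.

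\emph{Step 4 (combine).} From Step 1, $\tfrac12\|T^*T+TT^*\|=\|A^2+X^*X\|\leq\|A\|^2+\|X\|^2\leq 2w_\mathbb{H}(T)^2$. This yields $\tfrac12\sqrt{\|T^*T+TT^*\|}\leq w_\mathbb{H}(T)$, which is in fact stronger than the left inequality of (\ref{MI}) and therefore implies it.
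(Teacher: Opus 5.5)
Your proof is correct, and it takes a different route from the paper. The paper expands $\langle h,Th\rangle=\langle h,Ah\rangle+i\langle h,Bh\rangle+j\langle h,Ch\rangle+k\langle h,Dh\rangle$ and treats these as four orthogonal real coordinates. For the lower bound it then uses three ingredients: $x_1^2+\dots+x_4^2\ge\frac14(|x_1|+\dots+|x_4|)^2$; the self-adjoint norm formula applied to the combinations $\pm A\pm B\pm C\pm D$; and an average over the four sign patterns with a single minus sign. These losses produce the constant $\frac{1}{2\sqrt2}$. You split only into the real part $\langle h,Ah\rangle$ and the purely imaginary part $\langle h,Xh\rangle$. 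You then prove the skew-adjoint analogue $\|X\|=\sup_{\|h\|=1}|\langle h,Xh\rangle|$ by polarization with $y=hu$, and run Kittaneh's original argument $\|A^2+X^*X\|\le\|A\|^2+\|X\|^2\le 2w_{\mathbb{H}}(T)^2$.

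The paper's route needs only the self-adjoint lemma. Yours costs one extra lemma but gives the sharper constant $\frac12$, matching Kittaneh's. It is also the sound one. The paper's expansion uses $\langle h,iBh\rangle=i\langle h,Bh\rangle$, which fails because $i$ need not commute with $\overline{h_r}$. Take $n=2$, $T=iI$ (so $A=C=D=0$, $B=I$) and $h=\frac{1}{\sqrt2}(1,j)$. Then $\langle h,Th\rangle=\frac12(i+\overline{j}k)=0$, whereas $\langle h,Bh\rangle=1$. So both the paper's pointwise identity and its pointwise lower estimate fail at this $h$, although the final inequalities still hold. Your argument needs only that $\langle h,Ah\rangle$ is real and $\langle h,Xh\rangle$ is purely imaginary, which is valid.

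Finally, hypothesis (iii) plays no real role in your proof. The identity $A^2+X^*X=A^2-X^2=\frac12(T^*T+TT^*)$ holds for any self-adjoint $A$ and skew-adjoint $X$ with $T=A+X$, so the computation $X^*X=B^2+C^2+D^2$ can be dropped. Taking $A=\frac12(T+T^*)$ and $X=\frac12(T-T^*)$, your argument gives Kittaneh's inequalities with the original constants for every $T\in M_n(\mathbb{H})$.
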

\begin{proof}
	We have 
	\begin{align*}
		T=A+iB+jC+kD, \quad T^*=A-iB-jC-kD.
	\end{align*}
Therefore 
\begin{align*}
	T^*T+TT^*&=(A-iB-jC-kD)(A+iB+jC+kD)+(A+iB+jC+kD)(A-iB-jC-kD)\\
	&=2(A^2+B^2+C^2+D^2)-2(\{iB, jC\}+\{iB, kD\}+\{jC, kD\})\\
	&=2(A^2+B^2+C^2+D^2).
\end{align*}
For $h \in \mathbb{H}^n$ with $\|h\|=1$, we have
\begin{align*}
|\langle h, Th \rangle |^2&=|\langle h, (A+iB+jC+kD)h \rangle |^2\\
&=|\langle h, Ah \rangle +i\langle h, Bh \rangle +j\langle h, Ch \rangle +k\langle h, Dh \rangle |^2\\
&=|\langle h, Ah \rangle |^2+|\langle h, Bh \rangle |^2+|\langle h, Ch \rangle |^2+|\langle h, Dh \rangle |^2\\
&\leq \|Ah\|^2+\|Bh\|^2+\|Ch\|^2+\|Dh\|^2\\
&=\langle h, A^2h \rangle +\langle h, B^2h \rangle +\langle h, C^2h \rangle +\langle h, D^2h \rangle\\
&=\langle h, (A^2+B^2+C^2+D^2)h \rangle.
\end{align*}
Since $A, B, C, D $ are self-adjoint, we have
\begin{align*}
	w_\mathbb{H}(T)^2&= \sup_{h \in \mathbb{H}^n, \|h\|=1} |\langle h, Th \rangle |^2\\
	       &\leq \sup_{h \in \mathbb{H}^n, \|h\|=1} \langle h, (A^2+B^2+C^2+D^2)h \rangle\\
	       &=\|A^2+B^2+C^2+D^2\|\\
	       &=\frac{1}{2}\|	T^*T+TT^*\|.
\end{align*}
On the other hand, for $h \in \mathbb{H}^n$ with $\|h\|=1$, using the Cauchy-Schwarz inequality, we also have 
\begin{align*}
|\langle h, Th \rangle |^2&=|\langle h, (A+iB+jC+kD)h \rangle |^2\\
&=|\langle h, Ah \rangle +i\langle h, Bh \rangle +j\langle h, Ch \rangle +k\langle h, Dh \rangle |^2\\
&=|\langle h, Ah \rangle |^2+|\langle h, Bh \rangle |^2+|\langle h, Ch \rangle |^2+|\langle h, Dh \rangle |^2\\
&\geq \frac{1}{4}(|\langle h, Ah \rangle |+|\langle h, Bh \rangle |+|\langle h, Ch \rangle |+|\langle h, Dh \rangle|)^2\\
&\geq\frac{1}{4}|\langle h, (\pm A\pm B\pm C\pm D)h \rangle |^2.
\end{align*}
Since $A, B, C, D $ are self-adjoint, we have
\begin{align*}
	w_\mathbb{H}(T)^2&= \sup_{h \in \mathbb{H}^n, \|h\|=1} |\langle h, Th \rangle |^2\\
	&\geq \frac{1}{4}\sup_{h \in \mathbb{H}^n, \|h\|=1}|\langle h, (\pm A\pm B\pm C\pm D)h \rangle |^2\\
	&= \frac{1}{4}\|\pm A\pm B\pm C\pm D\|^2\\
	&=\frac{1}{4}\|(\pm A\pm B\pm C\pm D)^2\|.
\end{align*}
Therefore 
\begin{align*}
	&w_\mathbb{H}(T)^2\geq 	\\
	& \frac{1}{16}(\|(-A+B+C+D)^2\|+\|(A-B+C+D)^2\|+\|(A+B-C+D)^2\|+\|(A+B+C-D)^2\|)\\
	&\geq \frac{1}{16}\|(-A+B+C+D)^2+(A-B+C+D)^2+(A+B-C+D)^2+(A+B+C-D)^2\|\\
	&=\frac{1}{4}\|A^2+B^2+C^2+D^2\|\\
	&=\frac{1}{8}\|T^*T+TT^*\|.
\end{align*}
\end{proof}
\begin{corollary}
Let $T \in M_n(\mathbb{H})$. Assume that 
\begin{align*}
	T=A+iB+jC+kD, \text{ for some } A, B, C ,D \in M_n(\mathbb{H})
\end{align*}
satisfying the following conditions. 
\begin{enumerate}[\upshape(i)]
	\item $A^*=A$, $B^*=B$, $C^*=C$, $D^*=D$.
	\item $iB=Bi$, $jC=Cj$, $Dk=kD$.
	\item $\{iB, jC\}=\{iB, kD\}=\{jC, kD\}=0$.
\end{enumerate}
Then 
\begin{align*}
	\frac{1}{2\sqrt{2}}\sqrt{\|T^*T+TT^*\|}\leq w(T)\leq\frac{1}{\sqrt{2}}\sqrt{\|T^*T+TT^*\|}.	
\end{align*}		
\end{corollary}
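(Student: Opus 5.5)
The corollary follows directly from Theorem \ref{QKT}, so the plan is simply to check that its hypotheses imply those of the theorem, and then quote inequality (\ref{MI}).

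Conditions (i) and (ii) of the corollary are word for word conditions (i) and (ii) of Theorem \ref{QKT}. For condition (iii), the corollary assumes that each anti-commutator vanishes separately:
\begin{align*}
\{iB, jC\}=\{iB, kD\}=\{jC, kD\}=0 .
\end{align*}
Adding these three equalities gives
\begin{align*}
\{iB, jC\}+\{iB, kD\}+\{jC, kD\}=0,
\end{align*}
which is condition (iii) of Theorem \ref{QKT}. So $T=A+iB+jC+kD$ satisfies every hypothesis of the theorem.

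Applying Theorem \ref{QKT} then yields
\begin{align*}
\frac{1}{2\sqrt{2}}\sqrt{\|T^*T+TT^*\|}\leq w_\mathbb{H}(T)\leq\frac{1}{\sqrt{2}}\sqrt{\|T^*T+TT^*\|},
\end{align*}
which is the claimed inequality. Here $w(T)$ in the corollary is read as the quaternionic numerical radius $w_\mathbb{H}(T)$, since $T\in M_n(\mathbb{H})$.

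No step is a real obstacle, because the corollary's condition (iii) is strictly stronger than the theorem's. The only care needed is the notational identification $w(T)=w_\mathbb{H}(T)$.
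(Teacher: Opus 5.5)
Your proposal is correct and matches the paper, which gives no separate proof and treats this corollary as the special case of Theorem~\ref{QKT} in which the three anti-commutators vanish individually (so their sum vanishes), with $w(T)$ meaning $w_\mathbb{H}(T)$. One caution about the theorem you invoke: the paper's proof of Theorem~\ref{QKT} uses $\langle h, iBh\rangle = i\langle h, Bh\rangle$, which is false in general (for $B=\mathrm{diag}(1,-1)$ and $h=\frac{1}{\sqrt{2}}(1,j)$ the two sides are $i$ and $0$); the inequalities of Theorem~\ref{QKT} nevertheless hold for every $T\in M_n(\mathbb{H})$, as one sees by splitting $T$ into its Hermitian part $\frac{T+T^*}{2}$ and skew-Hermitian part $\frac{T-T^*}{2}$, so your deduction stands.
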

Following corollary says that using self adjoint real matrices, we can easily construct quaternionic matrices which satisfy  Inequalities (\ref{MI}). 
\begin{corollary}
Let $A, B, C ,D \in M_n(\mathbb{R})$ satisfying the  following conditions.  
\begin{enumerate}[\upshape(i)]
	\item $A^t=A$, $B^t=B$, $C^t=C$, $D^t=D$.
	\item $\{B, C\}+\{B, D\}+\{C, D\}=0$.
\end{enumerate}
Define 
\begin{align*}
	T\coloneqq A+iB+jC+kD \in M_n(\mathbb{H}).
\end{align*}
Then 
\begin{align*}
	\frac{1}{2\sqrt{2}}\sqrt{\|T^*T+TT^*\|}\leq w_\mathbb{H}(T)\leq\frac{1}{\sqrt{2}}\sqrt{\|T^*T+TT^*\|}.	
\end{align*}		
\end{corollary}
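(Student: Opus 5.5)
The plan is to prove the statement directly, not by checking hypothesis (iii) of Theorem \ref{QKT}. That check does not go through. Since $B,C,D$ are real, they commute with $i,j,k$, and
\begin{align*}
\{iB,jC\}=k[B,C],\quad \{iB,kD\}=-j[B,D],\quad \{jC,kD\}=i[C,D].
\end{align*}
So (iii) amounts to pairwise commutation, which the hypothesis $\{B,C\}+\{B,D\}+\{C,D\}=0$ does not give. Conditions (i) and (ii) are immediate: a real symmetric matrix is quaternionic self-adjoint and commutes with the scalars $i,j,k$.

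\textbf{Step 1 (the anti-commutator).} Set $X=iB+jC+kD$, so $T^*=A-X$. Then $T^*T+TT^*=2A^2-2X^2=2(S-E)$, where
\begin{align*}
S=A^2+B^2+C^2+D^2,\qquad E=k[B,C]-j[B,D]+i[C,D].
\end{align*}
Each $[B,C]$ is real antisymmetric, so $k[B,C]$ is self-adjoint, and likewise for the other two terms; hence $E$ is self-adjoint. The stated hypothesis $\{B,C\}+\{B,D\}+\{C,D\}=0$ is the identity $(B+C+D)^2=B^2+C^2+D^2$. I will use it to control $\|B+C+D\|$, and through it $\|E\|$, by $\|S\|$.

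\textbf{Step 2 (comparing $S$ with $S-E$).} Let $\sigma$ be one of the four ring automorphisms of $\mathbb{H}$ that fix $\mathbb{R}$ and flip the signs of two of $i,j,k$. Applied entrywise, $\sigma$ is a bijective isometry of $\mathbb{H}^n$ with $\sigma(Mh)=\sigma(M)\sigma(h)$. Hence $\|\sigma(M)\|=\|M\|$ and $w_\mathbb{H}(\sigma(M))=w_\mathbb{H}(M)$. Every $\sigma$ fixes $S$, and the sum of $\sigma(E)$ over the identity and the three sign flips is $0$. Averaging therefore gives
\begin{align*}
\|S\|\le \|S-E\|=\tfrac12\|T^*T+TT^*\|.
\end{align*}
For the reverse direction I expect $\|S-E\|\le 2\|S\|$, to be proved by bounding $\sup_{\|h\|=1}|\langle h,Eh\rangle|$ by $\langle h,Sh\rangle$ using Cauchy--Schwarz and Step 1.

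\textbf{Step 3 (numerical radius).} I follow the two estimates in the proof of Theorem \ref{QKT}. The upper estimate is $|\langle h,Th\rangle|^2\le\langle h,Sh\rangle$. The lower estimate is $w_\mathbb{H}(T)^2\ge\frac14\|\pm A\pm B\pm C\pm D\|^2$, and summing over sign patterns gives $w_\mathbb{H}(T)^2\ge\frac14\|S\|$; both use (\ref{ST}). The expansion $\langle h,iBh\rangle=i\langle h,Bh\rangle$ fails for general $h$, since $i$ does not commute with $\overline{h_r}$. I would instead expand $|\langle h,Th\rangle|^2$ honestly and use the $\sigma$-invariance of $w_\mathbb{H}$ to average away the mixed terms. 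Combining with Step 2, the upper bound comes from $\|S\|\le\tfrac12\|T^*T+TT^*\|$. The lower bound comes from $\|T^*T+TT^*\|=2\|S-E\|\le 4\|S\|\le 16w_\mathbb{H}(T)^2$; this is weaker than the stated $8w_\mathbb{H}(T)^2$ unless the constant in Step 2 improves to $\|S-E\|\le\|S\|$.

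\textbf{Main obstacle.} The hardest step is controlling $E$, together with the honest expansion in Step 3. First I would try to use $(B+C+D)^2=B^2+C^2+D^2$ to show that $\langle h,Eh\rangle$ is dominated by $\langle h,(B^2+C^2+D^2)h\rangle$. If that only gives the factor $2$, the lower constant drops from $\frac{1}{2\sqrt2}$ to $\frac14$, and the sharp constant would need a finer argument.
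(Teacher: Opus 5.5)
You are right that the corollary is not an instance of Theorem \ref{QKT}. For real $B,C,D$, condition (iii) reads $k[B,C]-j[B,D]+i[C,D]=0$, i.e.\ pairwise commutation. You are also right that the expansion $\langle h,iBh\rangle=i\langle h,Bh\rangle$ used in that proof fails for non-real $h$, and your averaging argument for $\|S\|\le\|S-E\|$ is correct.

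The gap is that the rest of your plan runs through $S$, and the comparisons it needs are false under the hypotheses.
\begin{itemize}
\item \textbf{Upper bound.} Take $A=D=0$, $B=\begin{pmatrix}0&1\\1&0\end{pmatrix}$, $C=\begin{pmatrix}1&0\\0&-1\end{pmatrix}$, so $\{B,C\}=0$ and $S=2I$. For $h=\frac{1}{\sqrt2}(k,1)^t$ one gets $iBh=jCh=\frac{1}{\sqrt2}(i,-j)^t$ and $\langle h,Th\rangle=-2j$. Hence $w_\mathbb{H}(T)^2\ge 4=2\|S\|$. So the estimate $w_\mathbb{H}(T)^2\le\|S\|$ behind your upper bound is false, and no averaging of mixed terms can repair it.
\item \textbf{The bound you need for the stated lower constant.} In the same example $T=X=-X^*$, so $S-E=X^*X$ and $\|S-E\|=\|X\|^2=4=2\|S\|$. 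Thus $\|S-E\|\le\|S\|$, which you need for the constant $\frac{1}{2\sqrt2}$, fails.
\item \textbf{The bound you expect.} $\|S-E\|\le2\|S\|$ fails as well. Take $A=0$ and the pairwise anticommuting real symmetric involutions $B=\begin{pmatrix}0&I_2\\I_2&0\end{pmatrix}$, $C=\begin{pmatrix}I_2&0\\0&-I_2\end{pmatrix}$, $D=\begin{pmatrix}0&J\\-J&0\end{pmatrix}$ with $J=\begin{pmatrix}0&-1\\1&0\end{pmatrix}$. The unit vector $h=\frac12(k,-i,1,j)^t$ gives $iBh=jCh=kDh=\frac12(i,k,-j,1)^t$, so $\|S-E\|=\|X\|^2=9=3\|S\|$.
\end{itemize}
So the hypothesis $\{B,C\}+\{B,D\}+\{C,D\}=0$ gives no useful control of $E$ by $S$. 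Your lower estimate $w_\mathbb{H}(T)^2\ge\frac14\|S\|$ is valid once you restrict to real unit vectors $h$, for which the expansion is legitimate. But combined with $\|S-E\|\le c\|S\|$, where necessarily $c\ge 3$, it only gives $\|T^*T+TT^*\|\le 8c\,w_\mathbb{H}(T)^2$ with $8c\ge 24$. The statement requires the factor $8$.

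The missing idea is to bypass $S$ and reduce to the complex case, where Theorem \ref{KT} is available.
\begin{itemize}
\item Write $h=h_1+jh_2$ with $h_1,h_2\in\mathbb{C}^n$. Restricting right scalar multiplication to $\mathbb{C}$ identifies $\mathbb{H}^n$ isometrically with $\mathbb{C}^{2n}$.
\item The $\mathbb{C}$-component of $\langle h,g\rangle$ in $\mathbb{H}=\mathbb{C}\oplus j\mathbb{C}$ is $\sum_r(\overline{h_{1,r}}g_{1,r}+\overline{h_{2,r}}g_{2,r})$.
\item Each $T\in M_n(\mathbb{H})$ then acts as a complex matrix $\chi(T)\in M_{2n}(\mathbb{C})$. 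The map $\chi$ is multiplicative, with $\chi(T^*)=\chi(T)^*$ and $\|\chi(T)\|=\|T\|$.
\item Three facts give $w_\mathbb{H}(T)=w_\mathbb{C}(\chi(T))$: $\langle hu,T(hu)\rangle=\bar u\langle h,Th\rangle u$ for unit $u\in\mathbb{H}$; every quaternion is conjugate to a complex number of the same modulus; and the $\mathbb{C}$-component of a quaternion has modulus at most that of the quaternion.
\end{itemize}
Applying Theorem \ref{KT} to $\chi(T)$ yields $\frac12\sqrt{\|T^*T+TT^*\|}\le w_\mathbb{H}(T)\le\frac{1}{\sqrt2}\sqrt{\|T^*T+TT^*\|}$ for every $T\in M_n(\mathbb{H})$. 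This contains the corollary, with the better lower constant $\frac12$, and uses none of the hypotheses on $A,B,C,D$.
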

Kittaneh actually derived a much more general result showing that Theorem \ref{KT} holds for any bounded linear operator on a complex Hilbert space. We state here the quaternionic Hilbert space version of Theorem \ref{QKT}, whose proof is similar to that of Theorem \ref{QKT}. For the definition and basics of quaternionic Hilbert spaces, we refer the reader to \cite{GHILONIMORETTIPEROTTI}.
\begin{theorem}
	Let $\mathcal{Q}$ be a quaternionic Hilbert space. Let $T:\mathcal{Q} \to \mathcal{Q} $ be a bounded linear operator. Assume that 
	\begin{align*}
		T=A+iB+jC+kD, \text{ for some bounded linear operators } A, B, C ,D: \mathcal{Q} \to \mathcal{Q}
	\end{align*}
	satisfying the following conditions. 
	\begin{enumerate}[\upshape(i)]
		\item $A^*=A$, $B^*=B$, $C^*=C$, $D^*=D$.
		\item $iB=Bi$, $jC=Cj$, $Dk=kD$.
		\item $\{iB, jC\}+\{iB, kD\}+\{jC, kD\}=0$.
	\end{enumerate}
	Then 
	\begin{align*}
		\frac{1}{2\sqrt{2}}\sqrt{\|T^*T+TT^*\|}\leq w_\mathbb{H}(T)\leq\frac{1}{\sqrt{2}}\sqrt{\|T^*T+TT^*\|}.	
	\end{align*}
\end{theorem}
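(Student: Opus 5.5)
The plan is to transplant the proof of Theorem \ref{QKT} verbatim to the operator setting on $\mathcal{Q}$. The only ingredient of that proof that used finite dimensionality is Equation (\ref{ST}), namely $\|S\|=\sup_{\|h\|=1}|\langle h,Sh\rangle|$ for self-adjoint $S$. So the first step is to re-check that argument for a bounded self-adjoint $S:\mathcal{Q}\to\mathcal{Q}$.

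\textbf{Step 1: re-prove (\ref{ST}) on $\mathcal{Q}$.} Set $M:=\sup_{\|h\|=1}|\langle h,Sh\rangle|$. The inequality $M\le\|S\|$ is Cauchy--Schwarz, which holds in any quaternionic Hilbert space. For the reverse inequality, expand $\langle x\pm y,S(x\pm y)\rangle$ and use the parallelogram law to get (\ref{AE}). Then take $x=Sh/\|Sh\|$, $y=h$. This gives $\|Sh\|\le M$ for every unit $h$ with $Sh\neq0$, and taking the supremum gives $\|S\|\le M$. Only sesquilinearity, self-adjointness, Cauchy--Schwarz and the parallelogram law are used, so the step goes through unchanged.

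\textbf{Step 2: the algebraic identity.} From (i) and the rule $(iB)^*=B^*\bar i=-Bi$, combined with (ii) $Bi=iB$, we get $T^*=A-iB-jC-kD$. Expanding $T^*T+TT^*$ then gives the cross terms with $A$ cancelling. The squares give $(iB)^2=i^2B^2=-B^2$ by (ii), and similarly for $C,D$. Finally, (iii) kills the mixed anticommutators. The result is
\[
T^*T+TT^*=2(A^2+B^2+C^2+D^2).
\]

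\textbf{Step 3: the two inequalities.} For a unit $h$, write $\langle h,Th\rangle=a+ib+jc+kd$ with $a=\langle h,Ah\rangle$, and $b,c,d$ defined likewise. These are real, since $A,\dots,D$ are self-adjoint, so $|\langle h,Th\rangle|^2=a^2+b^2+c^2+d^2$.

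For the upper bound, bound this by $\sum\|Ah\|^2=\langle h,(A^2+B^2+C^2+D^2)h\rangle$. Taking the supremum and applying Step 1 to the positive self-adjoint operator $A^2+\dots+D^2$ gives $w_\mathbb{H}(T)^2\le\frac12\|T^*T+TT^*\|$.

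For the lower bound, use $a^2+b^2+c^2+d^2\ge\frac14(|a|+|b|+|c|+|d|)^2\ge\frac14|\langle h,(\pm A\pm B\pm C\pm D)h\rangle|^2$. Step 1 then gives $w_\mathbb{H}(T)^2\ge\frac14\|S_\epsilon\|^2=\frac14\|S_\epsilon^2\|$ for each sign choice $S_\epsilon$; the equality $\|S\|^2=\|S^2\|$ for self-adjoint $S$ is itself obtained from Step 1 via $\langle h,S^2h\rangle=\|Sh\|^2$. Averaging over the four sign patterns with exactly one minus sign, the triangle inequality gives the bound $\frac1{16}\|\sum S_\epsilon^2\|$. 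A direct expansion shows that the mixed terms cancel in this sum, leaving $\sum S_\epsilon^2=4(A^2+B^2+C^2+D^2)$. Hence $w_\mathbb{H}(T)^2\ge\frac14\|A^2+\dots+D^2\|=\frac18\|T^*T+TT^*\|$.

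\textbf{Main obstacle.} The delicate point is Step 1 and the meaning of left scalar multiplication by $i,j,k$ on $\mathcal{Q}$. In a right quaternionic Hilbert space, $iB$ requires a fixed left multiplication, as in \cite{GHILONIMORETTIPEROTTI}. One needs that this left multiplication is an isometric operator satisfying $\langle h,iBh\rangle=i\langle h,Bh\rangle$ and $(iB)^*=B^*\bar i$. I would fix such a left scalar multiplication (a choice of Hilbert basis) at the outset and verify these compatibility rules first. After that, Steps 2 and 3 are purely formal copies of the matrix proof.
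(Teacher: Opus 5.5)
\textbf{Where the argument breaks.} Your Steps 1 and 2 are correct, and your route is the paper's route: the paper proves this theorem by repeating the proof of Theorem~\ref{QKT}. The gap is in Step 3, and it breaks both inequalities.

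Writing $\langle h,Th\rangle=a+ib+jc+kd$ with $b=\langle h,Bh\rangle$, $c=\langle h,Ch\rangle$, $d=\langle h,Dh\rangle$ requires $\langle h,iBh\rangle=i\langle h,Bh\rangle$. This ``compatibility rule'' from your last paragraph fails for every choice of left multiplication unless $B=0$. Since $iB$ is right linear, for a unit quaternion $r$ you have $\langle hr,iB(hr)\rangle=\bar r\langle h,iBh\rangle r$. On the other hand, $i\langle hr,B(hr)\rangle=i\langle h,Bh\rangle$, because $\langle h,Bh\rangle$ is real. Taking $r=j$, the rule at $h$ and at $hj$ together force $\langle h,Bh\rangle=0$.

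Two concrete failures in $\mathbb{H}^2$:
\begin{enumerate}[\upshape(a)]
\item Take $T=iI$, so $B=I$ and $A=C=D=0$, and $h=(1,j)/\sqrt2$. Then $\langle h,Th\rangle=\tfrac12(i+\bar j\,i\,j)=0$ while $b=1$, so your pointwise lower chain asserts $0\ge\tfrac14$.
\item Take $T=\mathrm{diag}(i,j)$, i.e.\ $B=\mathrm{diag}(1,0)$ and $C=\mathrm{diag}(0,1)$, which satisfy (i)--(iii), and $h=\tfrac12(\sqrt2,1+k)$. Then $\langle h,Th\rangle=i$, so $|\langle h,Th\rangle|^2=1>\tfrac12=b^2+c^2$. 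The identity feeding your upper bound therefore fails in the harmful direction.
\end{enumerate}
The paper's proof of Theorem~\ref{QKT} contains the same step, so transplanting it verbatim inherits the error.

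\textbf{Repairing the upper bound.} Keep the skew part together. Set $X:=iB+jC+kD$. Conditions (i)--(ii) give $X^*=-X$, and (ii)--(iii) give $X^*X=-X^2=B^2+C^2+D^2$. Hence $\langle h,Ah\rangle$ is real and $\langle h,Xh\rangle$ is purely imaginary, and
\[
|\langle h,Th\rangle|^2=\langle h,Ah\rangle^2+|\langle h,Xh\rangle|^2\le\|Ah\|^2+\|Xh\|^2=\langle h,(A^2+B^2+C^2+D^2)h\rangle .
\]
Step 1 then finishes the upper bound.

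\textbf{Repairing the lower bound.} This needs a genuinely new ingredient. The sign-averaging trick fails pointwise, as example (a) shows. Step 1 only controls self-adjoint operators, and $X$ cannot be written as a scalar times a self-adjoint operator because there is no central imaginary unit.

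The decomposition does give $w_\mathbb{H}(T)\ge\|A\|$ and $w_\mathbb{H}(T)\ge w_\mathbb{H}(X)$. What you need in addition is $w_\mathbb{H}(X)=\|X\|$ for skew-adjoint $X$.
\begin{enumerate}[\upshape(a)]
\item In finite dimensions, unitary diagonalisation of normal quaternionic matrices \cite{ZHANG} gives a unit right eigenvector $Xh=h\lambda$ with $|\lambda|=\|X\|$, so $\langle h,Xh\rangle=\lambda$.
\item On a general $\mathcal{Q}$, write $X=J|X|$ with $J$ an anti-self-adjoint unitary commuting with $|X|$ \cite{GHILONIMORETTIPEROTTI}. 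Test on unit vectors with $Jh=hi$, for which $\langle h,Xh\rangle=i\langle h,|X|h\rangle$; these vectors already realise $\sup\langle h,|X|h\rangle=\|X\|$.
\end{enumerate}
With this ingredient,
\[
w_\mathbb{H}(T)^2\ge\max(\|A\|^2,\|X\|^2)\ge\tfrac12\|A^2+X^*X\|=\tfrac14\|T^*T+TT^*\|.
\]
This is even Kittaneh's constant $\tfrac12$, stronger than the claimed $\tfrac1{2\sqrt2}$.
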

\begin{corollary}
	Let $\mathcal{H}$ be a real Hilbert space. Let  $A, B, C, D:\mathcal{H} \to \mathcal{H} $ be  bounded linear operators satisfying the following conditions. 
	\begin{enumerate}[\upshape(i)]
		\item $A^*=A$, $B^*=B$, $C^*=C$, $D^*=D$.
		\item $iB=Bi$, $jC=Cj$, $Dk=kD$.
		\item $\{iB, jC\}+\{iB, kD\}+\{jC, kD\}=0$.
	\end{enumerate}
	Then 
	\begin{align*}
		\frac{1}{2\sqrt{2}}\sqrt{\|T^*T+TT^*\|}\leq w(T)\leq\frac{1}{\sqrt{2}}\sqrt{\|T^*T+TT^*\|}.	
	\end{align*}	
\end{corollary}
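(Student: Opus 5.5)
The plan is to reduce the corollary to the quaternionic Hilbert space version of Theorem \ref{QKT} stated just above. We pass from the real Hilbert space $\mathcal{H}$ to its quaternionification and check that the hypotheses transfer.

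\textbf{Step 1 (quaternionification).} Set $\mathcal{Q}\coloneqq \mathcal{H}\oplus\mathcal{H}\oplus\mathcal{H}\oplus\mathcal{H}$, written formally as $\mathcal{Q}=\{x_0+x_1i+x_2j+x_3k: x_r\in\mathcal{H}\}$. Right multiplication by quaternions is defined through the Hamilton relations. The inner product is
\begin{align*}
\langle x,y\rangle\coloneqq \sum_{r,s}\overline{e_r}\,\langle x_r,y_s\rangle_{\mathcal{H}}\, e_s ,
\end{align*}
with $e_0=1, e_1=i, e_2=j, e_3=k$. One checks the axioms of a right quaternionic Hilbert space as in \cite{GHILONIMORETTIPEROTTI}. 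In particular $\|x\|^2=\sum_r\|x_r\|^2$, so $\mathcal{Q}$ is complete.

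\textbf{Step 2 (extending operators).} Each bounded real-linear $S$ on $\mathcal{H}$ extends to $\tilde S(\sum x_re_r)\coloneqq\sum (Sx_r)e_r$. This extension is right $\mathbb{H}$-linear and bounded with $\|\tilde S\|=\|S\|$. It satisfies $\widetilde{S^*}=\tilde S^*$, $\widetilde{ST}=\tilde S\tilde T$, and $\widetilde{S+T}=\tilde S+\tilde T$. Left multiplication by $i,j,k$ also makes sense on $\mathcal{Q}$, because $\mathcal{Q}\cong\mathcal{H}\otimes_{\mathbb{R}}\mathbb{H}$ is a two-sided module. Since $\tilde S$ acts only on the $\mathcal{H}$-factor, it commutes with left multiplication by any quaternion. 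Hence conditions (i) and (ii) of the theorem hold automatically for $\tilde A,\tilde B,\tilde C,\tilde D$.

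\textbf{Step 3 (condition (iii)).} Because $\tilde B,\tilde C,\tilde D$ commute with the units, we get $\{i\tilde B,j\tilde C\}=ij\tilde B\tilde C+ji\tilde C\tilde B=k[\tilde B,\tilde C]$, and similarly for the other two pairs. So condition (iii) is the hypothesis of the corollary interpreted inside $\mathcal{Q}$. It then holds for the extensions, since the extension map preserves sums and products. The quaternionic Hilbert space version of Theorem \ref{QKT} therefore applies to $\tilde T=\tilde A+i\tilde B+j\tilde C+k\tilde D$. It gives the two-sided inequality with $w_\mathbb{H}(\tilde T)$ and $\|\tilde T^*\tilde T+\tilde T\tilde T^*\|$.

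\textbf{Step 4 (identification).} Here $T$ denotes the operator $\tilde T$ on $\mathcal{Q}$, so $w(T)$ means $w_\mathbb{H}(\tilde T)$. This is the only meaningful reading, since $iB$ has no sense on $\mathcal{H}$ itself. The norm on the right-hand side is the operator norm in $\mathcal{Q}$, so the stated inequality is exactly the conclusion of Step 3.

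\textbf{Main obstacle.} I expect Steps 1 and 2 to be the delicate part: verifying that $\mathcal{Q}$ is a genuine quaternionic Hilbert space, and that left multiplication by units is well defined and commutes with extended operators. That is where the meaning of conditions (ii) and (iii) must be fixed. The rest is a direct invocation of the preceding theorem.
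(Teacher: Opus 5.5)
Your proposal is correct and follows the paper's intended route. The paper gives no separate argument for this corollary; it reads it off from the preceding quaternionic Hilbert space theorem via $\mathcal{H}\otimes_{\mathbb{R}}\mathbb{H}$, just as the real-matrix corollary comes from Theorem \ref{QKT} through $\mathbb{H}^n=\mathbb{R}^n\otimes_{\mathbb{R}}\mathbb{H}$, with (i)--(ii) automatic and (iii) read inside $\mathcal{Q}$ exactly as you do. As a side remark, your identity $\{i\tilde B,j\tilde C\}=k[\tilde B,\tilde C]$ shows that (iii) just says $B,C,D$ commute pairwise, and the inequality you invoke in fact holds for every bounded quaternionic operator (split $\tilde T$ into its self-adjoint and skew-adjoint parts), so your reduction does not rely on the special hypotheses.
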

We wish to note that Abu-Omar and Kittaneh \cite{ABUOMARKITTANEH} and Bhunia, Bag and Paul \cite{BHUNIABAGPAUL} improved Theorem \ref{KT} but the techniques used by them do not seem to carry over to quaternionic Hilbert spaces.

\section{Conclusions}
	\begin{enumerate}[\upshape(i)]
	\item In 2005, Kittaneh derived a breakthrough generalization of classical numerical radius inequalities \cite{KITTANEH}.
	\item In this article, we showed that a constrained version of Kittaneh numerical radius inequalities can be obtained for certain classes of matrices over quaternions.
\end{enumerate}

\section{Quaternionic Crouzeix Problem}
Breakthrough Crouzeix theorem says following. 
\begin{theorem} \label{CT} \cite{CROUZEIX, CROUZEIX2, CROUZEIX3, BICKELGORKIN, RANSFORDSCHWENNINGER} (\textbf{Crouzeix Theorem}) 
	For every $d \in \mathbb{N}$ and for every matrix $M\in \mathbb{M}_{d}(\mathbb{C})$, we have 
	\begin{align*}
		(\text{\textbf{Crouzeix-Palencia Inequality}})	\quad \quad \quad 	\|p(M)\|\leq (1+\sqrt{2}) \sup\left\{|p(z)|: z \in W_\mathbb{C}(M)\right\},  \quad \forall p \in \mathbb{C}[z].
	\end{align*}	
\end{theorem}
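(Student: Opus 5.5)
The statement is the Crouzeix theorem, in the form proved by Crouzeix and Palencia, and I would follow their route. Fix $M$ and $p$, and normalize so that $\sup\{|p(z)|: z\in W_\mathbb{C}(M)\}=1$. By continuity in the domain, it suffices to prove $\|f(M)\|\leq 1+\sqrt{2}$ whenever $\Omega$ is a bounded convex domain with smooth boundary satisfying $W_\mathbb{C}(M)\subset\Omega$, and $f$ is holomorphic on a neighbourhood of $\overline{\Omega}$ with $\sup_{\overline{\Omega}}|f|\leq 1$. Let $c$ be the supremum of $\|f(M)\|$ over all such $f$. This is finite, for example by the Cauchy integral bound, since the spectrum lies in $W_\mathbb{C}(M)$. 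The goal is to show $c\leq 1+\sqrt{2}$.

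First I would write the Cauchy formula $f(M)=\frac{1}{2\pi i}\int_{\partial\Omega} f(\sigma)(\sigma-M)^{-1}\,d\sigma$. Next I would introduce the Cauchy transform of the conjugate boundary values,
\begin{align*}
g(z)\coloneqq \frac{1}{2\pi i}\int_{\partial\Omega}\overline{f(\sigma)}\,\frac{d\sigma}{\sigma-z}, \quad z\in\Omega.
\end{align*}
The key identity is $f(M)+g(M)^*=\int_{\partial\Omega} f(\sigma)\,\mu(\sigma,M)\,ds(\sigma)$. Here $s$ is arc length, $\nu(\sigma)$ is the outward unit normal, and
\begin{align*}
\mu(\sigma,M)=\frac{1}{2\pi}\left(\nu(\sigma)(\sigma-M)^{-1}+\overline{\nu(\sigma)}(\overline{\sigma}-M^*)^{-1}\right).
\end{align*}
Since $W_\mathbb{C}(M)$ lies in the half-plane $\{z: \mathrm{Re}(\overline{\nu(\sigma)}(\sigma-z))>0\}$, each $\mu(\sigma,M)$ is positive semidefinite. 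Moreover $\int_{\partial\Omega}\mu(\sigma,M)\,ds=2I$, which one checks by taking $f=1$. A positive operator-valued measure of total mass $2I$ integrated against a function of modulus at most $1$ has norm at most $2$, so $\|f(M)+g(M)^*\|\leq 2$. The scalar version of the same computation, using the double-layer kernel, which is positive on convex domains, should give $\sup_{\Omega}|g|\leq 1$. Then $g$ is itself an admissible function, and the definition of $c$ yields $\|g(M)\|\leq c$.

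The final and most delicate step converts these two bounds into a quadratic inequality for $c$. By compactness (or by approximating), pick $f$ with $\|f(M)\|$ close to $c$, and a unit vector $x$ with $\|f(M)x\|=\|f(M)\|$, so that $f(M)^*f(M)x=\|f(M)\|^2x$. Set $y=f(M)x/\|f(M)\|$. Then
\begin{align*}
\|f(M)\|^2=\langle (f(M)+g(M)^*)f(M)x,\,x\rangle-\langle f(M)x,\,g(M)x\rangle .
\end{align*}
I would bound the first term using the estimate for $\|f(M)+g(M)^*\|$ applied to the admissible function $f^2$ or to a product such as $fg$. The second term would be controlled using $\|g(M)\|\leq c$ together with the identity for $\mu$ once more. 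The aim is to reach $c^2\leq 2c+1$, which gives $c\leq 1+\sqrt{2}$.

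I expect this last algebraic manipulation to be the main obstacle: choosing exactly which auxiliary function to feed into the positivity identity, so that the cross terms produce $2c+1$ and nothing worse. I would try first the choice used by Crouzeix and Palencia, namely working with $f(M)^*f(M)$ directly via the extremal vector as above. The analytic facts before it (the positivity of $\mu$ and $\sup_\Omega|g|\leq 1$) are standard potential-theoretic computations on convex domains.
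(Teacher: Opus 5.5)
The paper does not prove this statement; it quotes it from Crouzeix--Palencia. Your first two steps follow their route: the representation $f(M)+g(M)^*=\int_{\partial\Omega}f\,\mu\,ds$ with $\mu\ge 0$ and $\int\mu\,ds=2I$, and the scalar bound $\sup_\Omega|g|\le 1$. The genuine gap is the last step, which carries the whole $1+\sqrt2$ argument and which you leave open.

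First, your displayed identity is wrong. Since $\langle g(M)^*f(M)x,x\rangle=\langle f(M)x,g(M)x\rangle$, its right-hand side equals $\langle f(M)^2x,x\rangle$, not $\|f(M)x\|^2$ (the adjoint on $f(M)+g(M)^*$ is missing). The correct identity is
\begin{align*}
\|f(M)\|^2=\langle f(M)x,\,(f(M)+g(M)^*)x\rangle-\langle (gf)(M)x,\,x\rangle .
\end{align*}
Second, the first term needs nothing beyond $\|f(M)+g(M)^*\|\le 2$; all the difficulty sits in the cross term. The bound you propose there ($\|g(M)\|\le c$, or $\|(gf)(M)\|\le c$) only gives $c^2\le 3c$, i.e.\ $c\le 3$. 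Feeding $f^2$ or $fg$ into the positivity identity does not touch this term.

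The missing idea is to use the vector $y$ you defined but never used. Since $f(M)^*y=\|f(M)\|x$, you get
\begin{align*}
\langle (gf)(M)x,\,x\rangle=\frac{1}{\|f(M)\|}\langle (gf)(M)x,\,f(M)^*y\rangle=\frac{1}{\|f(M)\|}\langle (f^2g)(M)x,\,y\rangle .
\end{align*}
Here $\sup_\Omega|f^2g|\le 1$, so this is at most $c/\|f(M)\|$. This bound comes from the definition of $c$, not from the $\mu$-identity. Hence $\|f(M)\|^2\le 2\|f(M)\|+c/\|f(M)\|$, and letting $\|f(M)\|\to c$ gives $c^2\le 2c+1$. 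Equivalently, and without an extremal vector, use
\begin{align*}
(f(M)^*f(M))^2=f(M)^*f(M)\,(f(M)+g(M)^*)^*f(M)-f(M)^*(f^2g)(M).
\end{align*}
This gives $\|f(M)\|^4\le 2\|f(M)\|^3+c\|f(M)\|$, hence $c^4\le 2c^3+c^2$.

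Two smaller points also need fixing.
\begin{enumerate}[\upshape(i)]
\item $g$ and $f^2g$ are in general only bounded holomorphic on $\Omega$, not holomorphic near $\overline{\Omega}$, so $g$ is not admissible under your definition. Define $c$ over the unit ball of $H^\infty(\Omega)$, with $h(M)$ given by a Cauchy integral on a contour inside $\Omega$. Dilation about an interior point shows this is the same supremum.
\item $\sup_\Omega|g|\le 1$ does not follow from the interior scalar identity, which only gives $|f(z)+\overline{g(z)}|\le 2$. You need the boundary limit: there the double-layer kernel has mass $1$ and $\overline{g(\sigma_0)}=\int_{\partial\Omega}f\,\mu(\cdot,\sigma_0)\,ds$. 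Combine this with the maximum principle.
\end{enumerate}
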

Based on Theorem \ref{CT}, we formulate following problem. 
\begin{problem} (\textbf{Quaternionic Crouzeix Problem})
	Whether there is a universal constant $R_\mathbb{H}$  satisfying following: For every $d \in \mathbb{N}$ and for every $A\in \mathbb{M}_{d}(\mathbb{H})$, we have 
	\begin{align*}
		\|p(A)\|\leq R_\mathbb{H} \sup\left\{|p(z)|: z \in W_\mathbb{H}(A)\right\},  \quad \forall p(z)\coloneqq a_0+a_1z+a_2z^2+\cdots +a_nz^n \in \mathbb{H}[z], n \in \mathbb{N}
	\end{align*}
	where $\mathbb{H}[z]$ is the set of all polynomials over $\mathbb{H}$. 
	We conjecture that $R_\mathbb{H}=8$.
\end{problem}

 \bibliographystyle{plain}
 \bibliography{reference.bib}

\begin{thebibliography}{10}

\bibitem{ABUOMARKITTANEH}
Amer Abu-Omar and Fuad Kittaneh.
\newblock Upper and lower bounds for the numerical radius with an application
  to involution operators.
\newblock {\em Rocky Mt. J. Math.}, 45(4):1055--1064, 2015.

\bibitem{YEUNG}
Yik-Hoi Au-Yeung.
\newblock On the convexity of numerical range in quaternionic {Hilbert} spaces.
\newblock {\em Linear Multilinear Algebra}, 16:93--100, 1984.

\bibitem{YEUNGSIU}
Yik-Hoi Au-Yeung and Lok-Shun Siu.
\newblock Quaternionic numerical range and real subspaces.
\newblock {\em Linear Multilinear Algebra}, 45(4):317--327, 1999.

\bibitem{BHUNIABAGPAUL}
Pintu Bhunia, Santanu Bag, and Kallol Paul.
\newblock Numerical radius inequalities and its applications in estimation of
  zeros of polynomials.
\newblock {\em Linear Algebra Appl.}, 573:166--177, 2019.

\bibitem{BICKELGORKIN}
Kelly Bickel, Pamela Gorkin, Anne Greenbaum, Thomas Ransford, Felix~L.
  Schwenninger, and Elias Wegert.
\newblock Crouzeix's conjecture and related problems.
\newblock {\em Comput. Methods Funct. Theory}, 20(3-4):701--728, 2020.

\bibitem{CARVALHODIOGOMENDES2}
Lu{\'{\i}}s Carvalho, Cristina Diogo, and S{\'e}rgio Mendes.
\newblock On the convexity and circularity of the numerical range of nilpotent
  quaternionic matrices.
\newblock {\em New York J. Math.}, 25:1385--1404, 2019.

\bibitem{CARVALHODIOGOMENDES}
Lu{\'{\i}}s Carvalho, Cristina Diogo, and S{\'e}rgio Mendes.
\newblock The star-center of the quaternionic numerical range.
\newblock {\em Linear Algebra Appl.}, 603:166--185, 2020.

\bibitem{CARVALHODIOGOMENDES3}
Lu{\'{\i}}s Carvalho, Cristina Diogo, and S{\'e}rgio Mendes.
\newblock A new perspective on the quaternionic numerical range of normal
  matrices.
\newblock {\em Linear Multilinear Algebra}, 70(20):5068--5074, 2022.

\bibitem{CROUZEIX3}
M.~Crouzeix and C.~Palencia.
\newblock The numerical range is a {{\((1+\sqrt{2})\)}}-spectral set.
\newblock {\em SIAM J. Matrix Anal. Appl.}, 38(2):649--655, 2017.

\bibitem{CROUZEIX}
Michel Crouzeix.
\newblock Bounds for analytical functions of matrices.
\newblock {\em Integral Equations Oper. Theory}, 48(4):461--477, 2004.

\bibitem{CROUZEIX2}
Michel Crouzeix.
\newblock Numerical range and functional calculus in {Hilbert} space.
\newblock {\em J. Funct. Anal.}, 244(2):668--690, 2007.

\bibitem{GAUWUBOOK}
Hwa-Long Gau and Pei~Yuan Wu.
\newblock {\em Numerical ranges of {Hilbert} space operators}, volume 179 of
  {\em Encycl. Math. Appl.}
\newblock Cambridge: Cambridge University Press, 2021.

\bibitem{GHILONIMORETTIPEROTTI}
Riccardo Ghiloni, Valter Moretti, and Alessandro Perotti.
\newblock Continuous slice functional calculus in quaternionic {Hilbert}
  spaces.
\newblock {\em Rev. Math. Phys.}, 25(4):83, 2013.
\newblock Id/No 1350006.

\bibitem{GOLDBERGTADMOR}
Moshe Goldberg and Eitan Tadmor.
\newblock On the numerical radius and its applications.
\newblock {\em Linear Algebra Appl.}, 42:263--284, 1982.

\bibitem{GUSTAFSONRAO}
Karl~E. Gustafson and Duggirala K.~M. Rao.
\newblock {\em Numerical range. {The} field of values of linear operators and
  matrices}.
\newblock Universitext. New York, NY: Springer, 1996.

\bibitem{HALMOS2}
P.~R. Halmos.
\newblock Numerical ranges and normal dilations.
\newblock {\em Acta Sci. Math.}, 25:1--5, 1964.

\bibitem{HALMOS}
Paul~R. Halmos.
\newblock {\em A {Hilbert} space problem book}, volume~19 of {\em Grad. Texts
  Math.}
\newblock Springer, Cham, 1982.

\bibitem{HAUSDORFF}
F.~Hausdorff.
\newblock Der {Wertvorrat} einer {Bilinearform}.
\newblock {\em Math. Z.}, 3:314--316, 1919.

\bibitem{HORNJOHNSON}
Roger~A. Horn and Charles~R. Johnson.
\newblock {\em Topics in matrix analysis}.
\newblock Cambridge University Press, 1991.

\bibitem{ISTRATESCU}
Vasile~I. Istratescu.
\newblock {\em Introduction to linear operator theory}, volume~65 of {\em Pure
  Appl. Math., Marcel Dekker}.
\newblock Marcel Dekker, Inc., New York, NY, 1981.

\bibitem{KIPPENHAHN}
Rudolf Kippenhahn.
\newblock On the numerical range of a matrix.
\newblock {\em Linear Multilinear Algebra}, 56(1-2):185--225, 2008.

\bibitem{KITTANEH}
Fuad Kittaneh.
\newblock Numerical radius inequalities for {Hilbert} space operators.
\newblock {\em Stud. Math.}, 168(1):73--80, 2005.

\bibitem{LI}
Chi-Kwong Li.
\newblock A simple proof of the elliptical range theorem.
\newblock {\em Proc. Am. Math. Soc.}, 124(7):1985--1986, 1996.

\bibitem{RANSFORDSCHWENNINGER}
Thomas Ransford and Felix~L. Schwenninger.
\newblock Remarks on the {Crouzeix}-{Palencia} proof that the numerical range
  is a $(1+\sqrt{2})$-spectral set.
\newblock {\em SIAM J. Matrix Anal. Appl.}, 39(1):342--345, 2018.

\bibitem{SO}
Wasin So.
\newblock An explicit criterion for the convexity of quaternionic numerical
  range.
\newblock {\em Can. Math. Bull.}, 41(1):105--108, 1998.

\bibitem{SO2}
Wasin So.
\newblock Convexity conditions for 2 $\times$ 2 quaternionic numerical range.
\newblock {\em Southeast Asian Bull. Math.}, 47(5):707--715, 2023.

\bibitem{SOTHOMSON}
Wasin So and Robert~C. Thompson.
\newblock Convexity of the upper complex plane part of the numerical range of a
  quaternionic matrix.
\newblock {\em Linear Multilinear Algebra}, 41(4):303--365, 1996.

\bibitem{SOTHOMPSONZHANG}
Wasin So, Robert~C. Thompson, and Fuzhen Zhang.
\newblock The numerical range of normal matrices with quaternion entries.
\newblock {\em Linear Multilinear Algebra}, 37(1-3):175--195, 1994.

\bibitem{THOMPSON}
Robert~C. Thompson.
\newblock The upper numerical range of a quaternionic matrix is not a complex
  numerical range.
\newblock {\em Linear Algebra Appl.}, 254:19--28, 1997.

\bibitem{TOEPLITZ}
O.~Toeplitz.
\newblock Das algebraische {Analogon} zu einem {Satze} von {Fej{\'e}r}.
\newblock {\em Math. Z.}, 2:187--197, 1918.

\bibitem{ZHANG}
Fuzhen Zhang.
\newblock Quaternions and matrices of quaternions.
\newblock {\em Linear Algebra Appl.}, 251:21--57, 1997.

\end{thebibliography}

\end{document}